\documentclass[11pt]{article}
\usepackage[a4paper,margin=28mm]{geometry}
\usepackage[T1]{fontenc}
\usepackage{lmodern}
\usepackage{amsmath,amssymb,amsthm}
\usepackage{microtype,enumitem,booktabs}
\usepackage{cite}
\usepackage[colorlinks=true,linkcolor=blue,citecolor=blue,urlcolor=blue]{hyperref}

\newcommand{\DT}{\operatorname{Destop}}
\newcommand{\AT}{\operatorname{Asctop}}
\newcommand{\Peak}{\operatorname{Peak}}
\newcommand{\PW}{\operatorname{Pword}}
\newcommand{\rev}{\operatorname{rev}}
\renewcommand{\S}{\mathcal{S}}
\newcommand{\A}{\mathcal{A}}
\newcommand{\eps}{\varepsilon}
\newtheorem{theorem}{Theorem}[section]
\newtheorem{lemma}[theorem]{Lemma}

\newtheorem{corollary}[theorem]{Corollary}
\newtheorem{conjecture}[theorem]{Conjecture}
\theoremstyle{remark}
\newtheorem{remark}[theorem]{Remark}
\theoremstyle{definition}
\newtheorem{example}[theorem]{Example}
\numberwithin{equation}{section}
\setlist[enumerate]{label=\textup{(\roman*)},leftmargin=*,itemsep=3pt,topsep=5pt}
\begin{document}
\begin{center}
{\bf \large Two refined Wilf-equivalences for patterns of length 4 and a conjecture on Dumont permutations of the first kind}
\vspace{0.3cm}

\end{center}
\begin{center}

 Zuo-Ru Zhang, \ Hongkuan Zhao$^{*}$\\[8pt]
School of Mathematical Science\\
Hebei Normal University\\
 Shijiazhuang 050024, P. R. China\\[6pt]

{\tt  zrzhang@hebtu.edu.cn}

{\tt  hkzhao2001@163.com}

\end{center}

\begingroup
\renewcommand{\thefootnote}{\fnsymbol{footnote}}
\footnotetext[1]{Corresponding author.}
\endgroup

\begin{abstract}
We construct a bijection between $2143$-avoiding and $3421$-avoiding permutations that preserves the descent top set and the ascent top set and reverses the left-to-right order of the peaks.
This bijection allows us to establish a bijection between $1243$-avoiding and $3412$-avoiding permutations with the same properties.
These results refine two cases of a conjecture posed by Burstein and prove a conjecture posed by Burstein and Jones.
\end{abstract}
\vspace{0.3cm}
\noindent{\bf Keywords: } Bijections $\cdot$ Pattern avoidance $\cdot$ Equidistribution $\cdot$ Wilf-equivalence $\cdot$ Set-valued statistic

\noindent {\bf AMS Subject Classifications:}  05A05, 05A15.

\section{Introduction}
Let $\S_n$ denote the set of permutations of $[n]=\{1,\ldots,n\}$.
We write a permutation $\pi\in \S_n$ as $\pi=\pi_1\pi_2\ldots\pi_n$.
Given two permutations $\sigma\in\S_n$ and $\tau\in\S_k$ with $n\ge k$, we say that $\sigma$ contains the \emph{pattern} $\tau$ if there exist integers $i_1<i_2<\cdots<i_k$ such that $\sigma_{i_1}\sigma_{i_2}\cdots\sigma_{i_k}$ is order isomorphic to $\tau$.
Otherwise, we say that $\sigma$ \emph{avoids} $\tau$, or $\sigma$ is $\tau$-avoiding.
We write $\S_n(\tau)$ for the set of $\tau$-avoiding permutations in $\S_n$.
The study of pattern avoidance can be traced back to the work of MacMahon~\cite{MacMahon}.
For over a half century, this theme of research has been the focus in enumerative combinatorics (see B\'{o}na's book~\cite{Bona} and Kitaev's book~\cite{Kitaev}).

Two patterns $\sigma$ and $\tau$ are \emph{Wilf-equivalent} if $|\S_n(\sigma)|=|\S_n(\tau)|$ for every $n$.
A \emph{permutation statistic} is defined to be a function st: $\S_n \rightarrow A$, where $A$ is some set.
For permutation statistics $\mathrm{st}_1,\ldots,\mathrm{st}_r$, $\sigma$ and $\tau$ are said to be \emph{$(\mathrm{st}_1,\ldots,\mathrm{st}_r)$-Wilf-equivalent} if these statistics have the same joint distribution on $\S_n(\sigma)$ and $\S_n(\tau)$ for every $n$.
See~\cite{Bloom,BS1,BS2,Chan,CL,CK,DRS,pps,eli,RSZ,YGZ,YWZ,ZY1,ZY2} for various refinements of Wilf-equivalence concerning permutation statistics.
In this paper, we consider the following permutation statistics for every permutation $\pi\in\S_n$:

\begin{itemize}[leftmargin=*,itemsep=0pt,topsep=3pt]
\item $\DT(\pi)=\{\pi_i\mid 1\le i<n,\ \pi_i>\pi_{i+1}\}$, the \emph{descent top set} of $\pi$,
\item $\AT(\pi)=\{\pi_{i+1}\mid 1\le i<n,\ \pi_i<\pi_{i+1}\}$, the \emph{ascent top set} of $\pi$,
\item $\Peak(\pi)=\{\pi_i\mid 1<i<n,\ \pi_{i-1}<\pi_i>\pi_{i+1}\}=\DT(\pi)\cap\AT(\pi)$, the \emph{peak set} of $\pi$,
\item $\PW(\pi)$, the word obtained by reading the peaks of $\pi$ from left to right.
\end{itemize}

This paper is motivated by a conjecture posed by Burstein concerning the classification of Destop-Wilf-equivalence classes for patterns of length $4$.

\begin{conjecture}[~\cite{Burstein}, Conjecture 11]\label{conj:destop}
The non-singleton $\DT$-Wilf-equivalence classes in $\mathcal{S}_4$ are:
\begin{gather*}
\{1243,3412\};\\
\{2413,1423\};\\
\{2143,3421\};\\
\{2314,3124\};\\
\{2431,3142,3241,4132\}.
\end{gather*}
\end{conjecture}

Zhou, Zang, and Yan~\cite{ZZY} proved refinements for $\{2413,1423\}$ and $\{3142,3241,4132\}$.
In this paper, we settle the cases $\{1243,3412\}$ and $\{2143,3421\}$.
We state our main results as the following refinements.

\begin{theorem}\label{2143-3421}
There exists a bijection $\phi:\S_n(2143)\rightarrow\S_n(3421)$ such that, for every $\pi\in\S_n(2143)$,
\[
(\DT,\AT,\PW)(\pi)=(\DT,\AT,rev(\PW))(\phi(\pi)),
\]
where $rev(w)$ denotes the reversal of the word $w$.

In particular, the patterns $2143$ and $3421$ are $(\DT,\AT,\Peak)$-Wilf-equivalent.
\end{theorem}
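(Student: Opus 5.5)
We will build $\phi$ recursively on $n$ by removing the entry $n$. This is natural because $\DT$, $\AT$ and $\PW$ change in a controlled way when the maximum is inserted.

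\textbf{Step 1: structure of the two classes around $n$.} Write $\pi=\alpha\, n\,\beta$.
\begin{itemize}
\item If $\pi$ avoids $2143$, then $n$ can play only the role of the ``4''. So $\pi$ avoids $2143$ iff $\alpha n\beta$ has no pattern $21$ in $\alpha$ with a smaller value in $\beta$ below both, lying above something (the ``3''), and moreover $\alpha$ and $\beta$ avoid $2143$ appropriately.
\item For $3421$, the entry $n$ plays the ``4''. The condition becomes: there is no entry of $\alpha$ exceeding two entries of $\beta$ that form a descent pattern $21$. In addition, $\pi\setminus n$ must avoid $3421$.
\end{itemize}
It is cleaner to remove $n$ and record the \emph{site} where it was inserted. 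Insertion of $n$ between $\pi_i$ and $\pi_{i+1}$ has two effects:
\begin{itemize}
\item it adds $n$ to both $\DT$ and $\AT$, and adds $n$ as a peak;
\item it removes $\pi_i$ from $\DT$ if $\pi_i>\pi_{i+1}$, or removes $\pi_{i+1}$ from $\AT$ if $\pi_i<\pi_{i+1}$.
\end{itemize}
The effect at the endpoints is similar but one-sided. So $(\DT,\AT)$ of $\pi$ is determined by $(\DT,\AT)$ of $\pi\setminus n$ together with the \emph{type} of the site (descent site with top $x$, ascent site with top $y$, or an end). The peak word is $\PW(\pi\setminus n)$, with $n$ inserted at the corresponding position and with a possible peak destroyed.

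\textbf{Step 2: characterize admissible sites.} For $\sigma\in\S_{n-1}(2143)$, determine the set of sites where $n$ can be inserted keeping $2143$-avoidance; do the same for $3421$.
\begin{itemize}
\item For $2143$: inserting $n$ at a site is forbidden iff some $21$ lies to the left and a ``3'' lies to the right above it. The admissible sites therefore form an initial segment: all sites up to a certain point, plus the final position.
\item For $3421$ the admissible sites form a terminal segment, governed by the leftmost $21$ in the suffix.
\end{itemize}
This left/right mirror is the source of $\rev(\PW)$.

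\textbf{Step 3: define $\phi$ inductively.} Assume $\phi$ is defined on $\S_{n-1}$ with the stated property. Then we need, for $\sigma$ and $\tau=\phi(\sigma)$, a bijection between admissible sites of $\sigma$ and of $\tau$ that preserves site type, i.e.\ the value $x$ removed from $\DT$ or $y$ from $\AT$. If $n$ lands between peaks $p_j$ and $p_{j+1}$ in $\PW(\sigma)$, it must land between the mirrored peaks in $\tau$. The natural candidate is to match the descent site with top $x$ in $\sigma$ to the descent site with top $x$ in $\tau$, and likewise for ascent sites. Since $(\DT,\AT)$ agree, these sites exist bijectively. The remaining task is to check two things:
\begin{itemize}
\item the matched site is admissible exactly when the original is;
\item the relative position with respect to peaks is reversed.
\end{itemize}

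\textbf{Main obstacle.} The difficulty is that admissibility and peak order are not functions of $(\DT,\AT,\PW)$ alone, so plain induction on $\phi$ will fail. I expect to strengthen the inductive hypothesis by carrying a finer invariant. One candidate is the sequence of ``valley--peak'' runs, meaning the multiset of maximal monotone runs between peaks, recorded so that reversal acts on it. This invariant must determine the admissible sites on both sides. If that proves unwieldy, the fallback is a direct decomposition rather than insertion:
\begin{itemize}
\item write $\pi\in\S_n(2143)$ by its left-to-right minima (or right-to-left maxima);
\item use the known block structure for $2143=21\oplus21$ versus $3421$;
\item then reverse the order of the blocks while reassembling each block so that its descent tops and ascent tops are kept.
\end{itemize}
Either way, the crux is proving that the peak positions are exactly reversed while the top sets are preserved. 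The $\Peak$ corollary then follows immediately, since $\Peak=\DT\cap\AT$.
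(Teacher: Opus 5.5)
\textbf{There is a genuine gap: the proposal never constructs $\phi$.} In Step 3 the matching of sites by their top is well defined: the descent site with top $x$ in $\sigma$ goes to the one in $\tau=\phi(\sigma)$. But, as you concede, admissibility is not a function of $(\DT,\AT,\PW)$. Neither is the position in $\PW$ of a newly created peak $n$, when $n$ is inserted next to a top that is not a peak. So there is no reason $\sigma$ and $\tau$ have the same admissible sites of each type, and the induction cannot start. The ``finer invariant'' is never specified or shown to be stable under insertion. The fallback (``reverse the order of the blocks while reassembling each block'') names no decomposition and gives no reason the output avoids $3421$ or that the map is invertible. It also does not track the top sets. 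The ``crux'' you defer at the end is the entire theorem.

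\textbf{The structural facts in Step 2 are also false.} Number the sites of $\sigma\in\S_{n-1}$ as $0,1,\ldots,n-1$ by the number of entries to their left.
For $\sigma=3142\in\S_4(2143)$, inserting $5$ is allowed at sites $0,1,3,4$ but not at site $2$, since $31542$ contains $3154$. So the admissible sites are not ``an initial segment plus the final position''.
For $\sigma=32541\in\S_5(3421)$, inserting $6$ is allowed exactly at sites $0,2,4,5$; sites $1$ and $3$ produce the occurrences $3621$ and $5641$. So they do not form a terminal segment.

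\textbf{The missing idea is a reduction that isolates where $\rev(\PW)$ comes from.} Reversing the permutation maps $\S_n(2143)$ onto $\S_n(3412)$, exchanges $\DT$ and $\AT$, and reverses $\PW$. So it suffices to find a bijection $\theta:\S_n(3412)\to\S_n(3421)$ with $(\DT,\AT,\PW)(\pi)=(\AT,\DT,\PW)(\theta(\pi))$, and then take $\phi=\theta\circ\rev$.

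\textbf{The paper then uses a decomposition in which avoidance becomes a statement about block order.} For each value $i$ it writes $\pi=PA_0B_0A_1B_1\cdots A_kB_k$, where the $A_j$ are maximal runs of entries at most $i$ and $i\in A_0$. It shows that $3412$-avoidance means $A_1A_2\cdots A_k$ is decreasing for every $i$, while $3421$-avoidance means it is increasing. Then $\theta=T_1\circ\cdots\circ T_n$, where $T_i$ does two things:
\begin{enumerate}
\item It sorts the blocks $A_1,\ldots,A_k$ into increasing order. This does not affect tops or peaks, since these blocks are decreasing words flanked by larger entries.
\item It moves $i$ to the other end of $A_0$ when exactly one side of $i$ in $A_0$ is empty. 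This exchanges the membership of $i$ in $\DT$ and $\AT$ and creates or destroys no peak.
\end{enumerate}
One must check that the hypotheses survive each step, so that the composite is defined and lands in $\S_n(3421)$. Injectivity comes from a rule that recovers the original block order from the entries greater than $i$ lying to their left, combined with $|\S_n(3412)|=|\S_n(3421)|$. Your fallback gestures at blocks, but without a characterization of avoidance through the order of the blocks it cannot be carried out.
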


\begin{theorem}\label{1243-3412}
There exists a bijection $\psi:\S_n(1243)\rightarrow\S_n(3412)$ such that, for every $\pi\in\S_n(1243)$,
\[
(\DT,\AT,\PW)(\pi)=(\DT,\AT,rev(\PW))(\psi(\pi)).
\]
In particular, the patterns $1243$ and $3412$ are $(\DT,\AT,\Peak)$-Wilf-equivalent.
\end{theorem}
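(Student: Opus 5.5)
Our plan is to deduce Theorem~\ref{1243-3412} from Theorem~\ref{2143-3421} through a decomposition by the position of the entry $n$. The abstract itself indicates that $\psi$ is built from $\phi$, so the work lies in finding a decomposition common to both pairs of classes in which $\phi$ can be used as a black box on one block.

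First we study how $1243$- and $3412$-avoiders split. Write $\pi=\alpha\, n\,\beta$.
\begin{itemize}
\item If $\pi$ avoids $1243$, the entries to the right of $n$ interact with $\alpha$ through the condition that no ascent of $\alpha$ lies entirely below a descent pair...
\end{itemize}
More usefully, we restrict to the prefix ending at $n$. Avoiding $1243$ forces $\alpha$ to avoid $132$ relative to values below the smaller elements of $\beta$. This is messy, so we instead look for a common refinement: the structure where $2143$ and $1243$ differ only in the first letter.

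Concretely, a $1243$-occurrence is a $2143$-occurrence with its first two letters sorted. The natural tool is a ``left-to-right minima'' decomposition. Fix the set and positions of the left-to-right minima of $\pi$, and fill the remaining entries. A permutation avoids $1243$ if and only if the non-minima entries avoid $132$ in the appropriate relative sense. The analogous condition for $2143$ replaces this by a condition of the same shape.

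We will therefore aim at a more tractable route: a transformation $\theta:\S_n(1243)\to\S_n(2143)$ and $\theta':\S_n(3412)\to\S_n(3421)$, each preserving $(\DT,\AT,\PW)$. We would then set $\psi=\theta'^{-1}\circ\phi\circ\theta$.
\begin{itemize}
\item For $\theta$ we would take the classical Simion–Schmidt/West-type replacement on the ``$1243$ vs $2143$'' prefix. Fix the right-to-left maxima and their positions; this is legitimate since $43$ is the top part, and the entries playing the roles of $4,3$ can be taken among right-to-left maxima structures. Then refill the remaining entries by the unique arrangement avoiding $12$-before-a-descent versus $21$-before-a-descent.
\item For $\theta'$ we use the symmetric version on the first two letters $34$ vs the tail $12$/$21$.
\end{itemize}
The check that descent tops, ascent tops and the peak word survive is done locally. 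Refilling is performed only among entries that are neither maxima of their runs nor adjacent to changed positions, and we expect to verify that the multiset of run-tops per slot is fixed.

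The main obstacle is exactly this statistic preservation. Standard Simion–Schmidt-type refillings destroy descent sets. To handle it, we would first try a refilling that acts slot-by-slot between consecutive right-to-left maxima: within each slot, reorder only the descending runs while keeping run tops and their left-to-right order. We would prove by induction on $n$, removing $n$, that this gives a bijection between the two avoidance conditions. If the slot-wise reordering fails to be bijective, the fallback is a generating-tree argument. In that approach we show both classes have isomorphic generating trees under insertion of $n$, with labels recording the active sites together with whether insertion creates a descent top, an ascent top, or a peak. The bijection is then read off recursively, and the reversal of $\PW$ is inherited from $\phi$.
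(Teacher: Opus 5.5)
There is a genuine gap, and it sits at the center of your plan. The intermediate bijections $\theta:\S_n(1243)\to\S_n(2143)$ and $\theta':\S_n(3412)\to\S_n(3421)$ preserving $(\DT,\AT,\PW)$ do not exist. In fact, no bijection between these classes preserves even $\DT$ alone.

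Already at $n=4$ we have $\S_4(1243)=\S_4\setminus\{1243\}$ and $\S_4(2143)=\S_4\setminus\{2143\}$, while $\DT(1243)=\{4\}$ and $\DT(2143)=\{2,4\}$. So $\S_4(1243)$ has exactly one fewer permutation with descent top set $\{4\}$ than $\S_4(2143)$. The same count with $\DT(3412)=\{4\}$ and $\DT(3421)=\{2,4\}$ rules out $\theta'$. This agrees with Conjecture~\ref{conj:destop}, which places $1243$ and $2143$ in different classes.

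Consequently, neither the slot-wise refilling nor the generating-tree fallback can succeed, since both would produce a $\DT$-preserving bijection between non-equidistributed classes. What $3412$ and $3421$ actually admit is the paper's map $\theta$, which \emph{swaps} $\DT$ and $\AT$ rather than preserving them. The remaining parts of your proposal (the split around $n$, the left-to-right minima discussion) are left unfinished and do not supply an alternative.

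The missing idea is the reversal symmetry. Reversal sends $1243\mapsto 3421$ and $2143\mapsto 3412$, and it satisfies $(\DT,\AT,\PW)(\rev\pi)=(\AT,\DT,\rev(\PW))(\pi)$. Hence
\[
\psi=\rev\circ\phi^{-1}\circ\rev:\ \S_n(1243)\to\S_n(3421)\to\S_n(2143)\to\S_n(3412).
\]
The two reversals swap $\DT$ and $\AT$ twice, so these sets are preserved. Their two reversals of $\PW$ cancel, while $\phi^{-1}$ contributes one reversal. This gives exactly $(\DT,\AT,\PW)(\pi)=(\DT,\AT,\rev(\PW))(\psi(\pi))$, which is the paper's one-line proof.
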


A \emph{Dumont permutation of the first kind}~\cite{Dumont,Burstein,BJ} is a permutation $\pi\in\S_{2n}$ such that $\DT(\pi)=\{2,4,\ldots,2n\}$.
Let $D^1_{2n}(\tau)$ denote the set of Dumont permutations of the first kind avoiding $\tau$.
As stated in~\cite{Burstein}, the following conjecture, together with other Wilf-equivalences of patterns of length $4$ on Dumont permutations of the first kind conjectured by Archer and Lauderdale, motivated Conjecture~\ref{conj:destop}.

\begin{conjecture}[~\cite{BJ}, Conjecture 5.1]\label{conj:dumont}
For every $n\ge0$,
\[
|D^1_{2n}(2143)|=|D^1_{2n}(3421)|.
\]
\end{conjecture}

The conjecture was presented at the open problem sessions of Permutation Patterns in 2016 and 2018~\cite{BJ}.
Since our bijections preserve the descent top set, they restrict to bijections on Dumont permutations of the first kind.
These restrictions prove Conjecture~\ref{conj:dumont} and give the Wilf-equivalence for $1243$ and $3412$.

\begin{corollary}\label{du1}
For every $n\ge0$,
\[
\begin{aligned}
|D^1_{2n}(2143)|&=|D^1_{2n}(3421)|,\\
|D^1_{2n}(1243)|&=|D^1_{2n}(3412)|.
\end{aligned}
\]
\end{corollary}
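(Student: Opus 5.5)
The corollary follows directly from Theorems~\ref{2143-3421} and~\ref{1243-3412}: we restrict the bijections $\phi$ and $\psi$ to Dumont permutations of the first kind. Fix $n\ge 0$ and work in $\S_{2n}$. By definition,
\[
D^1_{2n}(\tau)=\{\pi\in\S_{2n}(\tau)\mid \DT(\pi)=\{2,4,\ldots,2n\}\},
\]
so $D^1_{2n}(\tau)$ is the fibre of the statistic $\DT$ over the fixed set $E_n=\{2,4,\ldots,2n\}$ inside $\S_{2n}(\tau)$.

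For the first identity, consider $\phi:\S_{2n}(2143)\to\S_{2n}(3421)$ from Theorem~\ref{2143-3421}. The first coordinate of the equality $(\DT,\AT,\PW)(\pi)=(\DT,\AT,\rev(\PW))(\phi(\pi))$ gives $\DT(\phi(\pi))=\DT(\pi)$ for every $\pi$. Hence $\phi$ maps $D^1_{2n}(2143)$ into $D^1_{2n}(3421)$. The same equality applied to $\pi=\phi^{-1}(\sigma)$ shows that $\phi^{-1}$ also preserves $\DT$, so $\phi^{-1}$ maps $D^1_{2n}(3421)$ into $D^1_{2n}(2143)$. Therefore the restriction of $\phi$ is a bijection $D^1_{2n}(2143)\to D^1_{2n}(3421)$, and the two sets have the same cardinality.

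The second identity follows by the identical argument applied to $\psi:\S_{2n}(1243)\to\S_{2n}(3412)$ from Theorem~\ref{1243-3412}.

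The case $n=0$ is trivial, since both sides count only the empty permutation. No step is a real obstacle here: all the difficulty lies in constructing $\phi$ and $\psi$ in the two theorems. The only point to check is that a bijection preserving a statistic restricts to a bijection on each fibre of that statistic, which is the inverse-map argument above.

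Equivalently, one can note that the $\DT$-equidistribution on $\S_{2n}(2143)$ and $\S_{2n}(3421)$, which is the $\DT$ part of the "in particular" statement of Theorem~\ref{2143-3421}, already gives equal fibre sizes over $E_n$. The same holds for $1243$ and $3412$ via Theorem~\ref{1243-3412}.
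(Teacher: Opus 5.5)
Your proposal is correct and takes the same approach as the paper: the paper also just observes that $\phi$ and $\psi$ preserve $\DT$, so they restrict to bijections on the permutations with $\DT=\{2,4,\ldots,2n\}$, i.e., on $D^1_{2n}$. Your check that $\phi^{-1}$ and $\psi^{-1}$ also preserve $\DT$ makes explicit the surjectivity of the restricted maps, which the paper leaves implicit.
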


\begin{remark}
Very recently, while we were writing the proofs, Dong, Liu, and Zhao~\cite{DLZ} posted a preprint presenting a different bijection between $\S_n(2143)$ and $\S_n(3421)$ that preserves $\DT$ and reverses $\PW$, and of course also proves Conjecture~\ref{conj:dumont}.
Our bijection additionally preserves the ascent top set, thus settling the case $\{1243,3412\}$ of Conjecture~\ref{conj:destop}.
Their work prompted us to examine how our bijections affect the left-to-right order of the peaks.
\end{remark}

\section{The bijections}
We first construct a bijection
\[
\theta:\S_n(3412)\longrightarrow\S_n(3421)
\]
such that
\[
(\DT,\AT,\PW)(\pi)=(\AT,\DT,\PW)(\theta(\pi))
\]
for every $\pi \in \S_n(3412)$.
By combining $\theta$ with reverse on permutations, we build a bijection between $\S_n(2143)$ and $\S_n(3421)$ and a bijection between $\S_n(1243)$ and $\S_n(3412)$.
Our construction uses a modification of the block decomposition of Zhou, Zang, and Yan~\cite{ZZY}.

For a permutation $\pi \in \S_n$ and an integer $i\in[n]$, $\pi$ can be uniquely written in the following form:
\[
\pi=PA_0B_0A_1B_1\cdots A_kB_k,
\]
where
\begin{itemize}[leftmargin=*,itemsep=0pt,topsep=3pt]
\item each $A_j$ is a maximal consecutive subsequence of $\pi$ with the elements not greater than $i$;
\item each $B_j$ is a maximal consecutive subsequence of $\pi$ with the elements greater than $i$;
\item $P$ is the consecutive subsequence before $A_0$;
\item $i$ belongs to $A_0$.
\end{itemize}

The words $P$ and $B_k$ may be empty.
We call this process the \emph{block decomposition} of $\pi$ with respect to $i$.
Define
\[
\A(\pi,i)=(A_1,A_2,\cdots,A_k).
\]
We call each $A_j(1\le j\le k)$ a \emph{block} of $\A(\pi,i)$.

For two words $U$ and $V$, $U$ is said to be less(resp. greater) than $V$, denoted by $U<V$(resp. $U>V$) if all the entries of $U$ are less(resp. greater) than the entries of $V$.
We say $U$ and $V$ are \emph{comparable} if either $U<V$ or $U>V$.

Let $M=(M_1,M_2,\ldots,M_k)$ be a $k$-tuple in which each $M_i$ is a word.
$M$ is said to be \emph{comparable} if any two of these words are comparable under the relation above.
Moreover, $M$ is \emph{increasing} if $M_1<M_2<\cdots<M_k$.
This concerns the relative sizes of entire blocks while an \emph{increasing word} has its individual letters in increasing order.
We use the analogous conventions for decreasing order.
The empty word $\eps$ and one-letter words, as well as $k$-tuples with $k\le1$, are all regarded as both increasing and decreasing.

\begin{lemma}\label{avoidance}
Let $\pi \in \S_n$.
Then
\begin{enumerate}
\item $\pi \in \S_n(3412)$ if and only if the word $A_1A_2\cdots A_k$ is decreasing for every $1\le i\le n$.
\item $\pi \in \S_n(3421)$ if and only if every such word is increasing.
\end{enumerate}
\end{lemma}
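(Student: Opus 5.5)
The plan is to prove both parts by the same two-directional argument, reading a pattern occurrence through the block decomposition with respect to a well-chosen $i$. First I record what the decomposition says. The letters of $A_1A_2\cdots A_k$ are exactly the entries $\le i$ that appear to the right of some entry $>i$ that is itself to the right of $i$. More precisely, an entry $a\le i$ lies in $A_1\cdots A_k$ if and only if there is an entry $b>i$ with $\mathrm{pos}(i)<\mathrm{pos}(b)<\mathrm{pos}(a)$. This holds because $A_0$ is the maximal run of small entries containing $i$, and $B_0$ is the first run of large entries after it.

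\textbf{Part (i), the ``only if'' direction.} Suppose that for some $i$ the word $A_1\cdots A_k$ is not decreasing. Then there are small entries $a<a'$, with $a$ to the left of $a'$, both lying in $A_1\cdots A_k$. Since $a\in A_1\cdots A_k$, there is a large entry $b>i$ with $i$ before $b$ before $a$. The subsequence $i\,b\,a\,a'$ then has $b$ largest. The relative order of $i,a,a'$ still needs to be pinned down; I expect this to be the main subtlety. All of $a,a'$ are less than $i$, since they are distinct from $i$ (which lies in $A_0$). So $i>a'>a$ and the pattern is $3412$ only if $i$ were the second largest, which it is. The values are $b>i>a'>a$ at positions $1,2,3,4$ in the order $i,b,a,a'$, which reads as the pattern $3\,4\,1\,2$. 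So $\pi$ contains $3412$.

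\textbf{Part (i), the ``if'' direction.} Conversely, suppose $\pi$ contains $3412$ via entries $c\,d\,a\,a'$ with $d>c>a'>a$. Take $i=c$. Then $d>i$ lies between $i$ and $a$, so by the characterization both $a$ and $a'$, which are $<i$ and to the right of $d$, belong to $A_1\cdots A_k$. They appear in increasing order, so the word is not decreasing.

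\textbf{Part (ii).} The argument is identical with $a>a'$. A non-increasing pair $a>a'$ in $A_1\cdots A_k$ gives $i\,b\,a\,a'$ with $b>i>a>a'$, which is the pattern $3421$. Conversely, an occurrence $c\,d\,a\,a'$ of $3421$, taking $i=c$, yields a decreasing pair inside $A_1\cdots A_k$.

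The only real point to check carefully is the characterization of membership in $A_1\cdots A_k$. Every small entry after the first large entry following $i$ lies in some $A_j$ with $j\ge1$, and no entry of $P$ or $A_0$ does. Both facts follow directly from the maximality of the runs in the definition.
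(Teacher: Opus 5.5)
Your proof is correct and follows essentially the same route as the paper. In one direction, an out-of-order pair in $A_1\cdots A_k$, together with $i$ and an entry of $B_0$, forms the forbidden pattern; in the other, an occurrence $c\,d\,a\,a'$ is detected in the block decomposition with respect to $i=c$. Your explicit characterization of membership in $A_1\cdots A_k$ only makes precise a step the paper leaves implicit, and the muddled sentence about $i$ being ``second largest'' can simply be dropped, since $b>i>a'>a$ already settles the pattern.
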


\begin{proof}
Let $\pi \in \S_n(3412)$ and $1\le i\le n$.
Suppose that the word $A_1A_2\cdots A_k$ is not decreasing.
Then there exist $a \in A_j$ and $b\in A_l$ with $1\le j\le l \le k$ such that $a<b$.
Choose any element $c \in B_0$.
Then $icab$ forms a pattern $3412$, a contradiction.
Therefore, $A_1A_2\cdots A_k$ is decreasing.

Conversely, assume that the word $A_1A_2\cdots A_k$ is decreasing for every $1\le i\le n$.
We need to prove that $\pi \in \S_n(3412)$.
If not, there exist integers $p<q<s<t$ such that $\pi_p,\pi_q,\pi_s,\pi_t$ is an occurrence of $3412$.
Then in the block decomposition of $\pi$ with respect to $\pi_p$, we have $\pi_s\in A_j$ and $\pi_t\in A_l$ with $j\le l$.
As $\pi_s<\pi_t$, we obtain $A_1A_2\cdots A_k$ is not decreasing, contradicting the hypothesis.
Thus (i) holds.
The proof of (ii) is analogous, we omit it here.
\end{proof}

Now we define transformations $T_i$ on permutations that form the bijection $\theta$. 
Let $\pi\in \S_n$ and assume that $\A(\pi,i)$ is comparable for some $i\in[n]$.
Write the block decomposition of $\pi$ with respect to $i$ as 
\[
\pi=PA_0B_0A_1B_1\cdots A_kB_k.
\]
Define
\[
S_i(\pi)=PA_0B_0A'_1B_1A'_2B_2\cdots A'_kB_k,
\]
where $A'_1<A'_2<\cdots<A'_k$ are the increasing rearrangement of $\{A_1,A_2,\ldots,A_k\}$.

We furthermore write $A_0=LiR$, and define
\[
E_i(\pi)=PA_0^*B_0A_1B_1A_2B_2\cdots A_kB_k,
\]
where
\[
A_0^*=\begin{cases}
iL,&L\ne\eps,\ R=\eps,\\
Ri,&L=\eps,\ R\ne\eps,\\
LiR,&\text{otherwise}.
\end{cases}
\]
We note that $\A(\pi,i)=\A(E_i(\pi),i)$ as tuples.
The transformation $T_i$ is defined by $T_i=S_i\circ E_i(=E_i\circ S_i)$.

\begin{lemma}\label{continue}
Let $\pi \in \S_n$.
Suppose that $\A(\pi,i)$ is comparable and that each of its blocks is a decreasing word for some $1\le i\le n$.
Then, for every $1\le j< i$,
\[
\A(T_i(\pi),j)\subseteq\A(\pi,j)
\]
as sets of words.
More precisely, $\A(T_i(\pi),j)$ is empty if $j$ belongs to a block of $\A(\pi,i)$; otherwise $\A(T_i(\pi),j)=\A(\pi,j)$ (as sets).
\end{lemma}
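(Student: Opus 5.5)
The plan is to read off the block decomposition of $T_i(\pi)$ with respect to $j$ directly from that of $\pi$ with respect to $i$. Write $\pi=PA_0B_0A_1B_1\cdots A_kB_k$ with respect to $i$, and let $j<i$. Two basic facts will be used throughout.

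First, every letter of every $B_m$ is $>i>j$. Hence each maximal run of letters $\le j$ lies entirely inside $P$, inside $A_0$, or inside a single block $A_m$.

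Second, $T_i$ changes $\pi$ in only two ways. It permutes the words $A_1,\dots,A_k$ among the slots between the $B$'s, keeping each word intact. It also possibly moves the letter $i$ to the front or the end of $A_0$. Since $i>j$, the letter $i$ behaves exactly like a letter of some $B$ as far as the threshold $j$ is concerned.

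Since $j\le i$, the letter $j$ lies in $P$, in $A_0\setminus\{i\}$, or in some block $A_m$ with $m\ge1$. I split into these cases.

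\emph{Case 1: $j$ lies in a block $A_m$ of $\A(\pi,i)$.} In $S_i(\pi)$ the blocks appear in increasing order $A'_1<\cdots<A'_k$, so $j$ lies in some $A'_r$. Every block $A'_s$ with $s>r$ is greater than $A'_r\ni j$, so all of its letters are $>j$. Inside $A'_r$, which is a decreasing word by hypothesis, the letters after $j$ are smaller than $j$. They therefore belong to the same maximal run of letters $\le j$ as $j$, namely the $A_0$-part of the decomposition with respect to $j$. After $A'_r$ come only $B$'s and the later blocks $A'_s$, $s>r$, and all of their letters are $>j$. Hence no letter $\le j$ occurs after the run containing $j$, and $\A(T_i(\pi),j)$ is empty. $E_i$ only touches $A_0$, which precedes $j$, so it does not affect this case. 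This is the only place where comparability and the decreasing hypothesis are used.

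\emph{Case 2: $j$ lies in $P$ or in $A_0\setminus\{i\}$.} I first show that $E_i$ does not change the maximal runs of letters $\le j$. Take the case $A_0=Li\to iL$ with $L\ne\eps$; the case $A_0=iR\to Ri$ is symmetric.
\begin{itemize}
\item The letter just before $A_0$ is the last letter of $P$. By maximality of $A_0$ it is $>i$, so placing $i$ at the front of $L$ cannot split or merge runs there.
\item Removing $i$ from the end leaves $L$ followed by $B_0$, whose letters are all $>j$. So runs at the right end of $L$ keep the same boundaries.
\end{itemize}
Thus the run containing $j$ is unchanged, and so is the sequence of runs $\le j$ inside $A_0$ that follow it.

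All runs $\le j$ inside $A_1,\dots,A_k$ come after $j$ in both $\pi$ and $T_i(\pi)$. Each such run is determined within its own block $A_m$, because the block is bounded by $B$'s. Since $S_i$ only reorders the intact words $A_m$, these runs are the same words in both permutations, possibly in a different order. Therefore $\A(T_i(\pi),j)=\A(\pi,j)$ as sets, and in particular $\A(T_i(\pi),j)\subseteq\A(\pi,j)$.

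I expect the main obstacle to be the bookkeeping in Case 2 around the move of $i$ inside $A_0$. One must check that the new neighbours of $i$ are letters $>j$ or boundary letters, so that no run of letters $\le j$ is created, destroyed or merged. One must also check the edge situations where $j$ lies at the very start or end of $L$ (respectively $R$), or where $P$ is empty. Case 1 is short once it is observed that decreasingness of the block carries the letters after $j$ into the run containing $j$.
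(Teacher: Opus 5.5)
Your proof is correct and follows the paper's route. You use the same split according to whether $j$ lies in a block of $\A(\pi,i)$, where the increasing reordering and the decreasingness of the block force $\A(T_i(\pi),j)=\emptyset$, or to the left of $B_0$, where $S_i$ only permutes intact blocks whose runs of letters $\le j$ are unchanged; the one difference is that you check explicitly that $E_i$ leaves these runs unchanged, which the paper states is easy to see.
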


\begin{proof}
Write the block decomposition of $\pi$ with respect to $i$ as
\[
\pi=PA_0B_0A_1B_1\cdots A_kB_k.
\]
It is easy to see that $\A(E_i(\pi),j)=\A(\pi,j)$ as tuples for any $1\le j< i$.
It remains to consider $S_i$.

\smallskip\noindent
\textbf{Case 1.} $j$ occurs to the right of $B_0$ in $\pi$.

Let $A_q$ be the block containing $j$, where $1\le q\le k$.
In $T_i(\pi)$, every element to the right of $A_q$ is greater than $j$.
Since the word $A_q$ is decreasing, the elements following $j$ within $A_q$ are therefore all less than $j$.
Consequently, $\A(T_i(\pi),j)=\emptyset$.

\smallskip\noindent
\textbf{Case 2.} $j$ occurs to the left of $B_0$ in $\pi$.

By applying $S_i$ to $\pi$, all the blocks of $\A(\pi,j)$ to the left of $B_0$ remain unchanged.
Since $j<i$, each block in $\A(\pi,j)$ to the right of $B_0$ is contained in some block of $\A(\pi,i)$.
$S_i$ only reorders the blocks of $\A(\pi,j)$ to the right of $B_0$, it follows that $\A(T_i(\pi),j)=\A(\pi,j)$ (as sets).
\end{proof}

For $\pi\in\S_n(3412)$, we define our map $\theta$ as $\theta(\pi)=(T_1\circ T_2\circ\cdots\circ T_n)(\pi)$.
\begin{example}
For $\pi=3564271\in\S_7(3412)$, the only nonidentity steps are
\[
3564271\xrightarrow{\ T_5\ }5361742\xrightarrow{\ T_4\ }5361724.
\]
\end{example}

To show $\theta$ is a bijection between $\S_n(3412)$ and $\S_n(3421)$, denote by
\[
\pi^{(n)}=\pi,\qquad
\pi^{(i-1)}=T_i(\pi^{(i)})\quad(i=n,n-1,\ldots,1).
\]
Thus $\theta(\pi)=\pi^{(0)}$.
\begin{lemma}\label{feasible}
Let $\pi \in\S_n(3412)$ and $0\le i\le n$.
Then $\pi^{(i)}$ is well-defined and $\A(\pi^{(i)},j)$ is comparable for any $1\le j\le i$.
Moreover, each block of $\A(\pi^{(i)},j)$ is a decreasing word.
\end{lemma}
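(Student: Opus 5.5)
We will argue by downward induction on $i$, from $i=n$ to $i=0$. The statement to carry along is: $\pi^{(i)}$ is defined, and for every $1\le j\le i$ the tuple $\A(\pi^{(i)},j)$ is comparable with each block a decreasing word. Note that $T_{i}$ is only defined when $\A(\cdot,i)$ is comparable. So once the claim holds at level $i$ for $j=i$, the map $T_i$ can be applied to $\pi^{(i)}$, and $\pi^{(i-1)}$ is well-defined.

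\emph{Base case $i=n$.} Here $\pi^{(n)}=\pi\in\S_n(3412)$, and Lemma~\ref{avoidance}(i) says that for each $j$ the word $A_1A_2\cdots A_k$ of the decomposition with respect to $j$ is decreasing. A decreasing concatenation forces every block to be a decreasing word, and it forces $A_1>A_2>\cdots>A_k$ blockwise. In particular $\A(\pi,j)$ is comparable with decreasing blocks for all $1\le j\le n$.

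\emph{Inductive step.} Assume the claim at level $i\ge1$. Since $\A(\pi^{(i)},i)$ is comparable with decreasing blocks, $T_i$ is defined, and Lemma~\ref{continue} applies with $\pi^{(i)}$ in place of $\pi$. For each $1\le j<i$ it gives two possibilities:
\begin{enumerate}
\item $\A(\pi^{(i-1)},j)$ is empty, which is trivially comparable;
\item $\A(\pi^{(i-1)},j)=\A(\pi^{(i)},j)$ as sets of words, only reordered.
\end{enumerate}
Comparability of a tuple depends only on its set of words. The property that each block is a decreasing word is likewise a property of the individual words. So in both cases the claim transfers from $\A(\pi^{(i)},j)$ to $\A(\pi^{(i-1)},j)$. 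This gives the claim at level $i-1$, where $j$ now ranges over $1\le j\le i-1$, and the case $i=0$ is vacuous apart from well-definedness.

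The only point needing care is the use of Lemma~\ref{continue}. One must check that its conclusion ``as sets'' really means equality of the multiset of block words, and not just of the underlying letter sets. That lemma's Case~2 gives exactly this: blocks of $\A(\cdot,j)$ lying to the right of $B_0$ are moved intact, since each is contained in a block of $\A(\cdot,i)$, and blocks to the left are untouched. I expect no further obstacle; the argument is essentially bookkeeping once Lemmas~\ref{avoidance} and~\ref{continue} are in hand.
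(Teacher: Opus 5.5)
Your proof is correct and is essentially the paper's argument: downward induction on $i$, with the base case from Lemma~\ref{avoidance}(i), and the inductive step obtained by applying Lemma~\ref{continue} to $\pi^{(i)}$ and noting that comparability and decreasingness of blocks pass to any subset of the block words. Your remark that ``as sets'' refers to the block words themselves, not merely their letters, is a useful clarification that the paper leaves implicit.
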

\begin{proof}
We use induction on $i$.
If $i=n$, we have $\pi^{(n)}=\pi$.
Then the conclusion follows from Lemma~\ref{avoidance}(i).
Suppose that $\pi^{(i+1)}$ is well-defined and that, for every $1\le j\le i+1$, $\A(\pi^{(i+1)},j)$ is comparable and consists of decreasing words.
In particular, $\A(\pi^{(i+1)},i+1)$ is comparable.
Thus $\pi^{(i)}=T_{i+1}(\pi^{(i+1)})$ is well-defined by the definition of $T_{i+1}$.
Moreover, since every block of $\A(\pi^{(i+1)},i+1)$ is decreasing by the induction hypothesis, Lemma~\ref{continue} applies and gives $\A(\pi^{(i)},j)\subseteq \A(\pi^{(i+1)},j)$.
Then the result follows directly from the induction hypothesis, completing the proof.
\end{proof}
\begin{lemma}\label{structure}
Let $\pi\in\S_n(3412)$ and $0\le i\le n-1$.
The following statements hold for every $i+1\le j\le n$.
\begin{enumerate}
\item $\A(\pi^{(i)},j)$ is increasing.
\item Every block in $\A(\pi^{(i)},j)$ has the form $DU$, where $D$ is decreasing with elements at most $i$, and $U$ is increasing with elements greater than $i$.
Either $D$ or $U$ may be empty.
\end{enumerate}
\end{lemma}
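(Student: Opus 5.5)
We argue by downward induction on $i$, from $i=n-1$ down to $0$, proving (i) and (ii) together for all $j$ with $i+1\le j\le n$.

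\emph{Base case $i=n-1$.} The only index is $j=n$. In $\pi^{(n-1)}=T_n(\pi)$, every $B$-block is empty, since no element exceeds $n$. Hence $k=0$ and $\A(\pi^{(n-1)},n)$ is empty. Both statements then hold vacuously.

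\emph{Inductive step.} Assume the statement for $i$, with $i\ge1$, and pass to $\pi^{(i-1)}=T_i(\pi^{(i)})$.

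For $j=i$, we use Lemma~\ref{feasible}: $\A(\pi^{(i)},i)$ is comparable and its blocks are decreasing. The step $S_i$ rearranges these blocks into increasing order, and $E_i$ does not change the tuple. So $\A(\pi^{(i-1)},i)$ is increasing, and each block is a decreasing word with entries at most $i-1$. This gives the form $DU$ with $U=\eps$.

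For $j\ge i+1$, we analyze how $T_i$ acts on the block decomposition with respect to $j$. Every letter moved by $T_i$ is at most $i<j$. The step $S_i$ permutes the $A$-blocks of the $i$-decomposition, which lie in the $i$-region to the right of $B_0^{(i)}$. The step $E_i$ moves $i$ only within $A_0^{(i)}$. An element greater than $j$ is also greater than $i$, so the $B$-blocks of the $j$-decomposition are unions of pieces of $i$-level $B$-blocks. Each $i$-level $B$-block stays in place as a set relative to the ascending-ordered $A$'s.

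The first thing to check is that the position of $j$ and of $B_0^{(j)}$ is unaffected. The letter $j$ is not moved. I expect, though I would need to verify it, that the letters between $j$ and the first element greater than $j$ do not cross from the $A$-region of $j$ into later blocks. So each $j$-block $A_m^{(j)}$ ($m\ge1$) of $\pi^{(i-1)}$ consists of the same multiset of letters as in $\pi^{(i)}$. Within the block, however, the letters $\le i$ that lie in maximal runs between elements in $(i,j]$ get rearranged.

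By hypothesis (ii), a $j$-block of $\pi^{(i)}$ has the form $DU$ with $D\le i$ decreasing and $U>i$ increasing. Write $D=D'\,x$, where $x$ is the part of $D$ equal to $i$ if $i\in D$. The elements of $D$ that are $\le i-1$ form maximal runs of letters $\le i$. These are $A$-blocks or $A_0$ of the $i$-decomposition, or prefix pieces of them.

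\begin{enumerate}
\item If $i\in D$, then $D$ is decreasing, so $i$ is the first letter of $D$ and of $A_0^{(i)}$. Then $E_i$ either leaves it or moves it, and in the case $L=\eps,\ R\ne\eps$, $i$ moves to the end of its run, directly before $U$. The block becomes $D''U'$ with $D''=D\setminus\{i\}$ decreasing and $U'=iU$ increasing, all of whose elements are greater than $i-1$.
\item If $i$ is absent from the block, then the $S_i$-reordering affects only $i$-blocks lying in $j$-blocks that follow $B_0^{(i)}$. Within a single $DU$ block, $D$ is one contiguous run of elements $\le i$ and is not split by elements greater than $i$. Hence $S_i$ only swaps whole $D$-parts between different $j$-blocks.
\end{enumerate}

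This swapping of $D$-parts is the main obstacle. We must show that it keeps the $j$-tuple increasing as a tuple of sets, and that (i) for $\pi^{(i-1)}$ follows. The plan is as follows. Let $A_1^{(j)}<\dots<A_r^{(j)}$ be increasing, and suppose their $D$-parts are reordered increasingly by $S_i$. The comparability from Lemma~\ref{feasible}, together with the monotonicity of the whole blocks, should force the $D$-parts to be already in increasing order. Then $S_i$ acts trivially on them, apart from $i$-blocks lying before $B_0^{(j)}$ or within the prefix.

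One delicate point is the interaction with $B_0^{(i)}$, which lies inside or before the $j$-region. I would treat it by cases on whether $i$ lies to the left of $j$, inside some $A_m^{(j)}$, or in $B^{(j)}$. The other delicate point is confirming that the $E_i$ move respects the split $DU$ at the new threshold $i-1$. With these established, both (i) and (ii) follow for $i-1$, which completes the induction.
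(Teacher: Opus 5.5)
\textbf{There is a genuine gap.} Your inductive step rests on two assertions that you leave unproved:
\begin{enumerate}
\item each $j$-block $A_m$ ($m\ge1$) of $\pi^{(i-1)}$ has the same letters as in $\pi^{(i)}$;
\item $S_i$ acts trivially on the $D$-parts of the $j$-blocks, apart from $i$-blocks lying before $B_0$ of the $j$-decomposition.
\end{enumerate}
Both are false, and the $i$-blocks you set aside are exactly the ones that move.

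Take the paper's example $\pi=3564271$, where $\pi^{(5)}=\pi$ and $\pi^{(4)}=T_5(\pi)=5361742$. For $j=6$ one has $\A(\pi^{(5)},6)=(1)$ but $\A(\pi^{(4)},6)=(42)$. The $5$-block $42$ sits inside $A_0=35642$ of the $6$-decomposition, to the left of $B_0=7$. It is nevertheless the larger block of $\A(\pi^{(5)},5)=(42,1)$, so $S_5$ moves it into the last slot, which is the $6$-block. So you cannot prove (i) by showing the $D$-parts are already sorted and left untouched. Your closing ``with these established'' defers precisely this step.

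Two smaller points. Your expectation about the shape of the $j$-decomposition is correct and easy: $T_i$ only refills the nonempty slots of letters $\le i$ between runs of letters $>i$, so $P$, $A_0$ and the positions of the $B$-blocks for $j$ are unchanged; only slot contents change. Also, $i$ can never lie in a $B$-block for $j$, since those letters exceed $j$.

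\textbf{The missing idea.} Split according to whether $i$ lies to the right or to the left of $B_0$ in the $j$-decomposition of $\pi^{(i)}$. In the second case, identify exactly which $i$-blocks sit inside $j$-blocks.

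\emph{Case 1: $i$ lies to the right of $B_0$.} Then $i$ lies in a $j$-block $A_q=i\,D\,U$. Increasingness of the $j$-tuple makes every letter after $D$ exceed $i$. Hence $\A(\pi^{(i)},i)$ is empty, $S_i$ is trivial, and $E_i$ just turns $A_q$ into $D\,i\,U$. This is your case (i), but it is the emptiness of $\A(\pi^{(i)},i)$ that guarantees nothing else moves.

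\emph{Case 2: $i$ lies to the left of $B_0$.} Let $h$ be the largest index such that $A_h$ contains a letter $<i$. Then:
\begin{enumerate}
\item $A_1<\dots<A_{h-1}<i$ and $A_h=D'U'$;
\item no letter $\le i$ occurs after $D'$;
\item so $A_1,\dots,A_{h-1},D'$ are precisely the last $h$ blocks of $\A(\pi^{(i)},i)$.
\end{enumerate}
Sorting replaces them by the $h$ largest $i$-blocks $C_1<\dots<C_{h-1}<E$. By Lemma~\ref{feasible} these are decreasing with letters $\le i-1$. This gives $\A(\pi^{(i-1)},j)=(C_1,\dots,C_{h-1},EU',A_{h+1},\dots,A_k)$. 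The tuple is increasing and has the form $DU$ at threshold $i-1$. The reason is not that $S_i$ leaves these slots alone. It is that whatever lands in them arrives sorted and lies below $i$, while $U'$ and $A_{h+1},\dots,A_k$ lie above $i$.
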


\begin{proof}
We proceed by induction on $i$.
If $i=n-1$, then $j=n$.
Both assertions hold since $\A(\pi^{(n-1)},n)$ is empty.
Assume that $\A(\pi^{(i+1)},j)$ is increasing and every block of $\A(\pi^{(i+1)},j)$ has the form $DU$ for any $i+2\le j\le n$, where $D$ is decreasing with elements at most $i+1$, and $U$ is increasing with elements greater than $i+1$.
We proceed to prove that the assertion holds for $i$.

It is clear that $\A(\pi^{(i)},i+1)$ is increasing.
Since $\A(\pi^{(i)},i+1)=\A(\pi^{(i+1)},i+1)$ (as sets), Lemma~\ref{feasible} tells us that every block of $\A(\pi^{(i)},i+1)$ is decreasing with elements at most $i$.
Now we assume that $j>i+1$.
Let $\pi^{(i+1)}=PA_0B_0A_1B_1\cdots A_kB_k$ be the block decomposition of $\pi^{(i+1)}$ with respect to $j$.
We consider the following two cases.

\smallskip\noindent
\textbf{Case 1.} $i+1$ occurs to the right of $B_0$ in $\pi^{(i+1)}$.

In this case, $i+1$ belongs to $A_q$ for some $1\le q\le k$.
By the induction hypothesis, $A_1<\cdots<A_k$.
Moreover, we can write $A_q=(i+1)DU$, where $D<i+1$ is decreasing and $U>i+1$ is increasing.
Therefore, $\A(\pi^{(i+1)},i+1)$ is empty and
\[
\pi^{(i)}=PA_0B_0\cdots A_{q-1}B_{q-1}D(i+1)UB_q\cdots A_kB_k.
\]
Clearly, $\A(\pi^{(i)},j)=(A_1,A_2,\cdots,A_{q-1},D(i+1)U,A_{q+1},\cdots,A_k)$ is increasing.
It is also easy to see that every block of $\A(\pi^{(i)},j)$ has the form $D'U'$, where $D'$ is decreasing with elements at most $i$, and $U'>i$ is increasing.

\smallskip\noindent
\textbf{Case 2.} $i+1$ occurs to the left of $B_0$ in $\pi^{(i+1)}$.

Note that $E_{i+1}$ takes place to the left of $B_0$.
If every element in every block of $\A(\pi^{(i+1)},j)$ is greater than $i+1$, then $\A(\pi^{(i)},j)=\A(\pi^{(i+1)},j)$ as $k$-tuples and the assertions follow immediately.
Otherwise, let $h$ be the biggest index for which $A_h$ contains an element less than $i+1$.
Then 
\[
A_1<A_2<\cdots<A_{h-1}<i+1<A_{h+1}<\cdots<A_k.
\]
Write $A_h=D'U'$, where $D'<i+1$ is nonempty and decreasing, and $U'>i+1$ is increasing ($U'$ may be empty).
The words $A_1,A_2,\cdots,A_{h-1},D'$ occupy the last $h$ positions in $\A(\pi^{(i+1)},i+1)$.
In $\A(\pi^{(i)},i+1)$, denote the words occupying these positions by $C_1<C_2<\cdots<C_{h-1}<E$.
They are decreasing words with elements less than $i+1$.
Therefore, $\A(\pi^{(i)},j)=(C_1,C_2,\ldots,C_{h-1},EU',A_{h+1},\ldots,A_k)$.
The words $C_1,C_2,\cdots,C_{h-1},E$ are decreasing words with elements at most $i$ by Lemma~\ref{feasible} and the words $U',A_{h+1},\ldots,A_k$ are unchanged, so every block in $\A(\pi^{(i)},j)$ has the required form.

To show $\A(\pi^{(i)},j)$ is increasing, it suffices to show that $C_{h-1}<EU'<A_{h+1}$.
This is evident, completing the induction.
\end{proof}

\begin{lemma}\label{well}
The map $\theta$ is well-defined from $\S_n(3412)$ to $\S_n(3421)$.
\end{lemma}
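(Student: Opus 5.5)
We need to show that for $\pi\in\S_n(3412)$ the permutation $\theta(\pi)=\pi^{(0)}$ is defined and avoids $3421$.

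Well-definedness is immediate from Lemma~\ref{feasible}. By induction, every $\pi^{(i)}$ exists because $\A(\pi^{(i+1)},i+1)$ is comparable, so $T_{i+1}$ can be applied at each step. Hence $\theta(\pi)=\pi^{(0)}$ is a well-defined permutation in $\S_n$.

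For avoidance, the plan is to apply Lemma~\ref{avoidance}(ii). We must show that for every $1\le j\le n$ the word $A_1A_2\cdots A_k$ in the block decomposition of $\pi^{(0)}$ with respect to $j$ is increasing. Take $i=0$ in Lemma~\ref{structure}, which is allowed since $0\le i\le n-1$. For every $1\le j\le n$ we get two facts.

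\begin{enumerate}
\item $\A(\pi^{(0)},j)$ is increasing as a tuple, that is, $A_1<A_2<\cdots<A_k$.
\item Each block has the form $DU$, where $D$ is decreasing with elements at most $0$. Since all entries are positive, $D$ is empty, so each block equals $U$ and is an increasing word.
\end{enumerate}

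Concatenating the blocks, $A_1A_2\cdots A_k$ is an increasing word: each block is increasing internally, and every entry of $A_l$ is less than every entry of $A_{l+1}$. This holds for every $j\in[n]$, so Lemma~\ref{avoidance}(ii) gives $\pi^{(0)}\in\S_n(3421)$.

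The only point needing care is that Lemma~\ref{structure} at $i=0$ covers all $j$ from $1$ to $n$; it does, since its range is $i+1\le j\le n$. Lemma~\ref{avoidance}(ii) was stated with its proof omitted as analogous to (i), so we use it as given. The real work has already been done in Lemma~\ref{structure}, and this proof is essentially bookkeeping.
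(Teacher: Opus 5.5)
Your proposal is correct and follows the paper's proof exactly. You get well-definedness from Lemma~\ref{feasible}, then apply Lemma~\ref{structure} at $i=0$ (where $D$ must be empty, so every block is an increasing word and the tuple is increasing), and conclude with Lemma~\ref{avoidance}(ii); your explicit remark that $D=\eps$ simply spells out what the paper states as ``every block of it is increasing.''
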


\begin{proof}
Lemma~\ref{feasible} tells us that the map $\theta$ is feasible.
Set $i=0$ in Lemma~\ref{structure}, we have that $\A(\pi^{(0)},j)$ is increasing and every block of it is increasing.
Lemma~\ref{avoidance} (ii) gives $\theta(\pi)=\pi^{(0)}\in\S_n(3421)$.
\end{proof}

Next we proceed to show that $\theta$ is a bijection.
\begin{lemma}\label{recover}
Let $\pi \in\S_n(3412)$ and $1\le i\le n$.
Write the block decomposition of $\pi^{(i)}$ with respect to $i$ as $\pi^{(i)}=PA_0B_0A_1B_1\cdots A_kB_k$.
For $1\le t\le k$, let $G_t$ be the subword of $PB_0B_1\cdots B_{t-1}$ consisting of elements greater than $i$.
Then

\[A_t=
\begin{cases}
\max\{A_t,\ldots,A_k\},&\text{if $G_t$ is decreasing},\\
\min\{A_t,\ldots,A_k\},&\text{otherwise}.
\end{cases}
\]

\end{lemma}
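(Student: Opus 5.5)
The statement asserts a specific shape for the tuple $\A(\pi^{(i)},i)$. Call $m$ the largest index $t$ for which $G_t$ is decreasing. Since $G_1\subseteq G_2\subseteq\cdots$ are prefixes of one another, once $G_t$ has an ascent so does every later $G_{t'}$. The claim is therefore equivalent to
\[
A_1>A_2>\cdots>A_m>\{A_{m+1},\ldots,A_k\},\qquad A_{m+1}<A_{m+2}<\cdots<A_k .
\]
In words: the largest $m$ blocks come first in decreasing order, and the remaining blocks follow in increasing order. The plan is to prove this shape by following how the blocks of $\A(\cdot,i)$ move under $T_n,T_{n-1},\ldots,T_{i+1}$.

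\textbf{Step 1: only the order of the blocks changes.} By Lemma~\ref{continue} and Lemma~\ref{feasible}, the set $\A(\pi^{(j)},i)$ for $j\ge i$ is either unchanged from $\A(\pi,i)$ or empty. So only the order of the blocks varies. For $\pi^{(n)}=\pi$ this order is decreasing, by Lemma~\ref{avoidance}(i).

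\textbf{Step 2: the effect of one $T_j$.} When $i$ lies left of the $B_0$ of the decomposition with respect to $j$, I would extract from the proof of Lemma~\ref{structure} (Case~2) a precise description. The blocks of $\A(\cdot,i)$ lying to the right of that $B_0$ are exactly the ones in the last positions of the tuple. They are the smallest remaining ones, lying inside the $D$-parts of the blocks $A_1,\ldots,A_h$ of $\A(\cdot,j)$. Applying $T_j$ rearranges them increasingly, while the earlier blocks stay put. When $i$ lies to the right of $B_0$, instead $\A(\cdot,i)$ becomes empty and nothing is to prove.

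\textbf{Step 3: the invariant and its induction.} I would prove by downward induction on $j$ ($n\ge j\ge i$) the following invariant for $\pi^{(j)}$. The tuple $\A(\pi^{(j)},i)$ has the shape above for some $m_j$. Moreover, $m_j$ is exactly the number of blocks preceding the first position at which the subsequence of letters $>i$ standing before the blocks acquires an ascent. The shape part propagates directly by Step~2, since sorting a final segment of a ``decreasing-then-increasing'' tuple whose tail consists of the smallest blocks keeps that shape. The subtle part is matching the new break point $m_{j-1}$ with the first ascent in $G$. For this I would use Lemma~\ref{structure}(ii): in $\pi^{(j-1)}$ the blocks of $\A(\cdot,j)$ have the form $DU$ with $D$ decreasing and small, and $U$ increasing and large. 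I would also use the move of $E_j$, which places $j$ next to the left of $B_0$.

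\textbf{Step 4: locating the ascent.} I would then argue two things. First, the large letters before the sorted region are still decreasing; this is where the 3412-avoidance of $\pi$ enters, through the original decreasing order of the blocks. Second, an ascent appears exactly once a large letter of $B_0$ (or of $U$) with respect to $j$ is followed by a larger one before the first sorted block. An example of such a pair is $j$ followed by an element of $B_0$.

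\textbf{Main obstacle.} I expect the hardest step to be this exact identification of the first ascent of $G_t$ with the boundary of the region sorted by the latest nontrivial $T_j$. It requires showing that later operations $T_{j'}$ with $i<j'<j$ neither destroy that ascent nor create an earlier one. I would first attempt this by showing that each $T_{j'}$ only permutes positions after its own $B_0$, and that its $B_0$ is at or after the earlier boundary.
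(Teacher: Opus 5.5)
\textbf{There is a genuine gap: the invariant you propose in Step~3 is false, so your induction does not even start.} Take $\pi=345261\in\S_6(3412)$ and $i=3$. Here $T_6$ and $T_5$ act trivially, so $\pi^{(6)}=\pi^{(5)}=\pi^{(4)}=\pi$. The decomposition with respect to $3$ is $P=\eps$, $A_0=3$, $B_0=45$, $A_1=2$, $B_1=6$, $A_2=1$. The tuple $(2,1)$ is decreasing, yet $G_1=45$ already has an ascent. So for $j=6,5,4$ your rule gives $m_j=0$, which would force $A_1<A_2$. The conclusion of the lemma only becomes true after $T_4$, which produces $\pi^{(3)}=435162$ with blocks $(1,2)$.

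In this example the ascent $4<5$ only matters once $T_4$ has been applied. So any stage-wise invariant must at least discount ascents $a<b$ with $a\le j$ at stage $\pi^{(j)}$. Even after that repair, the step you yourself flag as the main obstacle is only announced, not proved: that $E_l$ and $S_l$ with $i<l<j$ neither create nor destroy the relevant ascents in front of the blocks. Step~4 is likewise unsettled. Read literally, its first claim fails here: in $\pi^{(3)}$ the large letters $4,5$ in front of the region sorted by $T_4$ form an ascent.

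The missing idea is that no tracking of break points is needed. The paper argues once, at the final stage $\pi^{(i)}$, where Lemma~\ref{structure} already records the combined effect of $T_n,\ldots,T_{i+1}$, and it treats each $A_t$ separately.
\begin{itemize}
\item If $G_t$ has an ascent $a<b$, then $a>i$, and $A_t,\ldots,A_k$ lie to the right of the $B_0$ of $\pi^{(i)}$ with respect to $a$. By Lemma~\ref{structure}(ii), every block of $\A(\pi^{(i)},a)$ has the form $DU$, so it contains at most one of them. By Lemma~\ref{structure}(i) these blocks increase, hence $A_t<\cdots<A_k$.
\item If $G_t$ is decreasing, then $A_t$ never lies inside a block of $\A(\pi^{(j)},j)$ with $j>i$. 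Otherwise, for the smallest such $j$, the word $\pi^{(j-1)}$ has $j$ followed by some $c>j$ from $B_0$, both to the left of $A_t$. The subsequent $T_l$ ($i<l<j$) move only $l$ and factors to the right of $A_t$, so this ascent survives into $G_t$, a contradiction. Thus no $S_j$ changes the relative order of $A_t$ and $A_{t+1},\ldots,A_k$, so $A_t$ precedes them already in $\pi$. Lemma~\ref{avoidance}(i) then gives $A_t=\max\{A_t,\ldots,A_k\}$.
\end{itemize}
This uses exactly your ingredients ($j$ followed by an element of $B_0$, Lemma~\ref{structure}(ii), $3412$-avoidance), but applied per block rather than through an invariant on the break point.
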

\begin{proof}
The extrema exist by Lemma~\ref{feasible}.
By Lemma~\ref{feasible} and Lemma~\ref{continue}, we have that $\A(\pi^{(i)},i)\subseteq\A(\pi^{(i+1)},i)\subseteq\cdots\subseteq\A(\pi^{(n)},i)$ (as sets).
It follows that every $A_t(1\le t\le k)$ belongs to $\A(\pi^{(h)},i)$ for any $i\le h\le n$.

Fix $1\le t\le k$.
Suppose first that $G_t$ is decreasing.
We claim that $A_t$ is not contained in any block of $\A(\pi^{(j)},j)$ with $j>i$.
Otherwise, choose the smallest such $j$.
Then $\pi^{(j-1)}$ can be written in the form $\pi^{(j-1)}=\cdots j\cdots c\cdots A_t\cdots$, where $c>j$.
By the choice of $j$, every $S_l$ acts to the right of $A_t$, while $E_l$ moves only $l<j$, where $i<l<j$.
Hence $j$ and $c$ still occur to the left of $A_t$ in this order, contradicting the hypothesis that $G_t$ is decreasing.
Thus every $S_j$, $j>i$, acts to the right of $A_t$.
Since each $E_j$ moves only $j>i$, we have that $A_t$ precedes $A_q(t<q\le k)$ in $\pi$.
Lemma~\ref{avoidance}(i) therefore gives $A_t=\max\{A_t,A_{t+1},\cdots,A_k\}$.

Now suppose that $G_t$ contains an increasing pair $a<b$.
Then each of $A_t,A_{t+1},\cdots,A_k$ is contained in some block of $\A(\pi^{(i)},a)$.
It follows from Lemma~\ref{structure}(ii) that they are contained in different blocks.
By Lemma~\ref{structure}(i), $\A(\pi^{(i)},a)$ is increasing, implying that $A_t<A_{t+1}<\cdots<A_k$.
\end{proof}

\begin{lemma}\label{bijective}
The map $\theta$ is a bijection between $\S_n(3412)$ and $\S_n(3421)$.
\end{lemma}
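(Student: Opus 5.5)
Since $\theta$ is well-defined by Lemma~\ref{well}, and since by Simion--Schmidt/West $|\S_n(3412)|=|\S_n(3421)|$ is not something we want to lean on, I plan to prove bijectivity directly by showing that $\theta$ is injective and then constructing the inverse step by step. The key tool is Lemma~\ref{recover}: it lets us read off, from $\pi^{(i)}$ alone, the original ordering of the blocks of $\A(\pi^{(i+1)},i)$ before $T_i$ sorted them. So the plan is to show that each step $\pi^{(i)}\mapsto\pi^{(i-1)}=T_i(\pi^{(i)})$ can be undone using only $\pi^{(i-1)}$ and $i$. Then $\pi=\pi^{(n)}$ is recovered from $\theta(\pi)=\pi^{(0)}$ by applying these inverse steps for $i=1,2,\ldots,n$, which gives injectivity.

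\textbf{Undoing $E_i$.} Since $S_i$ only permutes blocks to the right of $B_0$, the decomposition with respect to $i$ of $\pi^{(i-1)}$ has the same $P$ and $B_0$ as that of $\pi^{(i)}$, and $A_0$ becomes $A_0^*$. I need to decide whether $A_0^*=iL$ came from $Li$, or $Ri$ came from $iR$, or nothing moved. I expect Lemma~\ref{structure}(ii) and the decreasing form from Lemma~\ref{feasible} to settle this. In $\pi^{(i)}$, the word $A_0$ sits inside some block of $\A(\pi^{(i)},j)$ for larger $j$, or is constrained by the fact that the elements $\le i$ in such a block form a decreasing prefix $D$. This suggests that in $\pi^{(i)}$ the letter $i$ is always either first or last among the letters $\le i$ of its maximal segment. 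Hence only one of the two readings of $A_0^*$ is consistent, namely the one with $i$ placed as the max of a decreasing-then-increasing shape. I would settle the ambiguous case, $L=\eps$ versus $R=\eps$, by checking which choice keeps $A_0$ compatible with Lemma~\ref{structure}(ii) for $\pi^{(i)}$. I expect this to be the main obstacle, and I would try first to prove directly from the induction in Lemma~\ref{structure} that in $\pi^{(i)}$ we always have $R=\eps$ whenever $P\neq\eps$, or a similar normal form.

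\textbf{Undoing $S_i$.} After $S_i$ the blocks $A'_1<\cdots<A'_k$ are increasing, and the $B_t$ and $P$ are unchanged. The words $G_t$ depend only on $P,B_0,\ldots,B_{t-1}$, so they are computable from $\pi^{(i-1)}$. I then refill positions $t=1,\ldots,k$ greedily: put the max of the remaining blocks if $G_t$ is decreasing, and the min otherwise. By Lemma~\ref{recover}, applied to $\pi^{(i)}$ whose $G_t$ coincide, this reproduces $A_1,\ldots,A_k$ exactly. So $\pi^{(i)}$ is determined by $\pi^{(i-1)}$, and $\theta$ is injective.

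\textbf{Surjectivity.} For $\sigma\in\S_n(3421)$ I apply the same inverse steps $U_1,\ldots,U_n$ and verify that they are defined and land in $\S_n(3412)$. The fact that $\A(\sigma,j)$ is increasing for all $j$, by Lemma~\ref{avoidance}(ii), gives comparability at each stage. I would show by an induction mirroring Lemmas~\ref{feasible} and~\ref{structure} that the end result has every $A_1\cdots A_k$ decreasing, using the choice rule. Alternatively, if injectivity is established, I would invoke the known equality $|\S_n(3412)|=|\S_n(3421)|$ (both are Wilf-equivalent to $1234$ via the classical results) to conclude bijectivity, which avoids the messier surjectivity induction.
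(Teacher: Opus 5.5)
Your handling of $S_i$ is the paper's argument. The words $G_t$ are read from $P,B_0,\ldots,B_{t-1}$, which $T_i$ does not touch, and Lemma~\ref{recover} then lets you refill the block positions greedily with the max or min of the remaining set. Your fallback for surjectivity, injectivity plus $|\S_n(3412)|=|\S_n(3421)|$, is also how the paper concludes. The direct surjectivity induction you sketch is never carried out and is not needed.

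The problem is the step you call the main obstacle. You treat undoing $E_i$ as ambiguous and propose to settle it with Lemma~\ref{structure} and an unproved normal form. That normal form is false. In the paper's own example, $\pi^{(4)}=5361742$ has, with respect to $4$, $P=53617\neq\eps$ and $A_0=42$, so $R=2\neq\eps$. That route would have stalled.

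There is no ambiguity to resolve, for two reasons:
\begin{itemize}
\item $A_0^*$ occupies the same positions as $A_0$ and has the same letters. So the block decomposition of $E_i(\pi)$ with respect to $i$ is $PA_0^*B_0A_1B_1\cdots A_kB_k$, with the same $P$ and $B_t$.
\item By the case definition, $Li\mapsto iL$ and $iR\mapsto Ri$ when $L,R\neq\eps$, and every other $LiR$ is fixed. In particular, a word $iX$ with $X\neq\eps$ is never fixed, since it falls in the case $L=\eps$, $R\neq\eps$. Hence $A_0^*=iX$ can only come from $Xi$, symmetrically $Xi$ only from $iX$, and all remaining words only from themselves.
\end{itemize}
Thus $E_i$ is an involution, and $A_0$ is recovered simply by applying $E_i$ to $\pi^{(i-1)}$, as the paper does. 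Replace your detour through Lemma~\ref{structure} by this observation and the injectivity argument is complete.
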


\begin{proof}
By Lemma~\ref{well} and the well known fact that $|\S_n(3412)|=|\S_n(3421)|$~\cite{West}, it suffices to prove that $\theta$ is injective.
To this end, we show that $\pi^{(i-1)}$ uniquely determines $\pi^{(i)}$.

Write the block decomposition of $\pi^{(i-1)}$ with respect to $i$ as
\[
\pi^{(i-1)}=PA_0^*B_0A'_1B_1\cdots A'_kB_k.
\]
By the definition of $T_i$, $\pi^{(i)}$ is of the form 
\[
\pi^{(i)}=PA_0B_0A_1B_1\cdots A_kB_k,
\]
where $A_0$ is obtained directly from $E_i(\pi^{(i-1)})$ since $E_i$ is clearly an involution and $A_1, A_2,\cdots,A_k$ is an arrangement of the set $\mathcal{R}=\{A'_1, A'_2,\cdots, A'_k\}$. 

We obtain the word $G_1$ by choosing all elements greater than $i$ in $PB_0$ and reading them from left to right.
By Lemma~\ref{recover}, in $\pi^{(i)}$, $A_1=\max\mathcal{R}$ if $G_1$ is decreasing, and $A_1=\min\mathcal{R}$ otherwise.
Place that word in the position and remove it from $\mathcal{R}$.
For $t=2,3,\ldots,k$, repeat the same rule with the remaining set $\mathcal{R}$.
This determines $A_1,A_2,\ldots,A_k$, completing the proof.
\end{proof}

\begin{lemma}\label{sta}
Let $\pi \in \S_n(3412)$.
Then
\begin{enumerate}
  \item $\DT(\pi)=\AT(\theta(\pi))$;
  \item $\AT(\pi)=\DT(\theta(\pi))$;
  \item $\PW(\pi)=\PW(\theta(\pi))$.
\end{enumerate}
\end{lemma}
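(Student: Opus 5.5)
The plan is to prove all three statements by following a single element $x$ through the chain $\pi=\pi^{(n)}\to\pi^{(n-1)}\to\cdots\to\pi^{(0)}=\theta(\pi)$. Whether $x\in\DT$ or $x\in\AT$ depends only on whether the right neighbour, respectively the left neighbour, of $x$ is smaller than $x$. I will show that every step $T_j$ with $j\neq x$ leaves these two bits unchanged, and that $T_x$ swaps them. Together these give (i) and (ii). For (iii) I will show that the peak set is never changed and that the left-to-right order of the peaks is preserved at every step.

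\emph{Step 1: $T_j$ with $j<x$.} Let $\pi^{(j)}=PA_0B_0A_1B_1\cdots A_kB_k$ be the block decomposition with respect to $j$. Both $E_j$ and $S_j$ only permute letters $\le j$ within the $A$-region, and the $B$-words stay intact. If $x$ lies in some $B_t$ or in $P$, a neighbour of $x$ that is $>j$ is unchanged. A neighbour that is $\le j$ may be replaced, but only by another letter $\le j<x$. Hence both bits of $x$ are preserved.

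\emph{Step 2: $T_j$ with $j>x$.} Now $x$ may lie in a block $A_t$ ($t\ge1$) that $S_j$ moves. Neighbours inside $A_t$ are unchanged. At the ends of $A_t$ the neighbours are letters of some $B$ (all $>j>x$) or nothing, both before and after the move. If $x\in A_0=LjR$ and $E_j$ acts, I check the two nontrivial cases. In the case $iL$, the last letter of $L$ is followed by $B_0$ instead of $j$, and the first letter of $L$ is preceded by $j$ instead of the last letter of $P$. All of these letters are $>x$ or absent, and a first letter of the permutation stays first. The case $Ri$ is symmetric. So the bits of $x$ are again preserved.

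\emph{Step 3: $T_x$ itself.} Write $A_0=LxR$. The letter before $A_0$ is the last letter of $P$, which is $>x$ by maximality, or there is none; the letter after $A_0$ is in $B_0$. Thus $x\in\DT$ iff $R\ne\eps$, and $x\in\AT$ iff $L\ne\eps$. The map $E_x$ exchanges exactly these two conditions when one of $L,R$ is empty, and when both or neither are empty it changes nothing, which is consistent with a swap. The map $S_x$ does not touch the neighbours of $x$. Combining Steps 1--3 gives (i) and (ii); intersecting them gives that $\Peak$ is preserved at every step.

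\emph{Step 4: the order of the peaks, which is the delicate part.} By Lemma~\ref{feasible}, at the moment $S_j$ is applied every block $A_t$ ($t\ge1$) is a decreasing word, and it is preceded by a letter of $B_{t-1}$, which is larger. Hence no $A_t$ with $t\ge1$ contains a peak. All peaks therefore lie in $P$, $A_0$ or the $B$'s, which keep their relative left-to-right order under $S_j$. The map $E_j$ only moves $j$; in the case $LjR$ it does nothing, and otherwise $j$ is not a peak either before or after the move. Hence every other letter keeps its relative order, so $\PW$ is invariant under each $T_j$, which gives (iii).

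I expect the main obstacle to be the boundary bookkeeping in Steps 2--4: the cases where $P$ or $B_k$ is empty, or where $x$ is the first or last letter of the permutation, must be checked so that ``no neighbour'' is handled consistently with ``neighbour larger''.
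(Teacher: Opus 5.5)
Your proposal is correct and follows essentially the paper's argument: $T_j$ with $j\neq x$ leaves the descent-top and ascent-top status of $x$ unchanged, $T_x$ swaps it according to which of $L,R$ is empty, and $\PW$ is preserved because the blocks $A_t$ ($t\ge1$) are decreasing and preceded by a larger letter, while a nontrivial $E_j$ moves only the non-peak $j$ (you make explicit the larger preceding letter, which the paper leaves implicit). One small slip is that in Step 2 the claim that ``a first letter of the permutation stays first'' fails when $P=\eps$ (e.g.\ $Lj\mapsto jL$ makes $j$ the new first letter), but this is harmless, since ``no left neighbour'' and ``left neighbour $j>x$'' both keep $x$ out of $\AT$, exactly as your ``$>x$ or absent'' already covers.
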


\begin{proof}
Let $i,j\in [n]$ with $i\neq j$.
Write the block decomposition of $\pi^{(i)}$ with respect to $i$ as $\pi^{(i)}=PA_0B_0A_1B_1\cdots A_kB_k$ and write $A_0=LiR$.

Assume that $j\in \DT(\pi^{(i)})$.
We proceed to show that $j\in \DT(\pi^{(i-1)})$ by considering three cases.
If $j$ occurs in $P$ or some $B_h(0\le h\le k)$, it is easy to see that $j\in \DT(\pi^{(i-1)})$.
If $j$ occurs in some $A_h$ with $1\le h\le k$, $j$ can not be the last element of $A_h$.
By the definition of $T_i$, we also have $j\in \DT(\pi^{(i-1)})$.
If $j$ occurs in $A_0$, then $j$ lies in $L$ or $R$ and is not the last element of that word.
It follows that $j\in\DT(\pi^{(i-1)})$.
Thus we have $j\in \DT(\pi^{(i)})\Rightarrow j\in\DT(\pi^{(i-1)})$.
Since the above process is reversible, we have that $j\in\DT(\pi^{(i-1)})\Rightarrow j\in\DT(\pi^{(i)})$.
Hence $j\in\DT(\pi^{(i)})\Leftrightarrow j\in\DT(\pi^{(i-1)})$.
Similarly, $j\in\AT(\pi^{(i)})\Leftrightarrow j\in\AT(\pi^{(i-1)})$.

The membership of $i$ in the two top sets is determined by whether $L$ and $R$ are empty:
\[
\begin{array}{ccc}
\toprule
\text{Condition}&\text{$i$ in $\pi^{(i)}$}&\text{$i$ in $\pi^{(i-1)}$}\\ \midrule
L\ne\eps,\ R=\eps&\text{ascent top only}&\text{descent top only}\\
L=\eps,\ R\ne\eps&\text{descent top only}&\text{ascent top only}\\
L\ne\eps,\ R\ne\eps&\text{both}&\text{both}\\
L=R=\eps&\text{neither}&\text{neither}
\\ \bottomrule
\end{array}
\]
Thus $i\in \DT(\pi^{(i)})\Leftrightarrow i\in \AT(\pi^{(i-1)})$ and $i\in \AT(\pi^{(i)})\Leftrightarrow i\in \DT(\pi^{(i-1)})$.
Combining the above arguments and the definition of $\theta$, we get (i) and (ii).

We proceed to prove (iii).
By Lemma~\ref{feasible}, $A_l(1\le l\le k)$ is decreasing and thus contains no peak.
A nonidentity $E_i$ moves only $i$ from one end of $A_0$ to the other, which in this case is clearly not a peak.
Thus $\PW((\pi)^{(i)})=\PW(\pi^{(i-1)})$, implying (iii).
\end{proof}

\begin{proof}[\normalfont\bfseries Proof of Theorem~\ref{2143-3421}]
By Lemma~\ref{bijective} and Lemma~\ref{sta}, the map $\phi=\theta\circ\rev$ is the bijection between $\S_n(2143)$ and $\S_n(3421)$ we want, completing the proof.
\end{proof}

\begin{proof}[\normalfont\bfseries Proof of Theorem~\ref{1243-3412}]
It is easy to see that $\psi=\rev\circ\phi^{-1}\circ\rev$ is the bijection between $\S_n(1243)$ and $\S_n(3412)$ we want.
\end{proof}


\begin{thebibliography}{99}
\bibitem{Bloom} J. Bloom, A refinement of Wilf-equivalence for patterns of length 4, J. Combin. Theory Ser. A \textbf{124} (2014) 166--177, \url{https://doi.org/10.1016/j.jcta.2014.01.001}.

\bibitem{BS1} J. Bloom, D. Saracino, On bijections for pattern-avoiding permutations, J. Combin. Theory Ser. A \textbf{116} (2009) 1271--1284, \url{https://doi.org/10.1016/j.jcta.2009.03.006}.

\bibitem{BS2} J. Bloom, D. Saracino, Another look at bijections for pattern-avoiding permutations, Adv. Appl. Math. \textbf{45} (2010) 395--409, \url{https://doi.org/10.1016/j.aam.2010.03.001}.

\bibitem{Bona} M. B\'{o}na, Combinatorics of Permutations, Chapman \& Hall/CRC, Boca Raton, FL, 2004, \url{https://doi.org/10.1201/9780203494370}.

\bibitem{Burstein} A. Burstein, Distribution of sets of descent tops and descent bottoms on restricted permutations, Discrete Math. Theor. Comput. Sci. \textbf{26}:1 (2025), Article 8, \url{https://doi.org/10.46298/dmtcs.12636}.

\bibitem{BJ} A. Burstein, O. Jones, Enumeration of Dumont permutations avoiding certain four-letter patterns, Discrete Math. Theor. Comput. Sci. \textbf{22}:2 (2021), Article 7, \url{https://doi.org/10.46298/dmtcs.6174}.

\bibitem{Chan} J.H.C. Chan, An infinite family of inv-Wilf-equivalent permutation pairs, Eur. J. Combin. \textbf{44} (2015) 57--76, \url{https://doi.org/10.1016/j.ejc.2014.08.031}.

\bibitem{CL} J.N. Chen, Z. Lin, A bijection for length-5 patterns in permutations, J. Combin. Theory Ser. A \textbf{202} (2024) 105815, \url{https://doi.org/10.1016/j.jcta.2023.105815}.

\bibitem{CK} A. Claesson, S. Kitaev, Classification of bijections between 321- and 132-avoiding permutations, S\'{e}m. Lothar. Combin. \textbf{60} (2008) B60d.

\bibitem{DRS} E. Deutsch, A. Robertson, D. Saracino, Refined restricted involutions, Eur. J. Combin. \textbf{28} (2007) 481--498, \url{https://doi.org/10.1016/j.ejc.2004.09.006}.

\bibitem{pps} T. Dokos, T. Dwyer, B.P. Johnson, B.E. Sagan, K. Selsor, Permutation patterns and statistics, Discrete Math. \textbf{312} (2012) 2760--2775, \url{https://doi.org/10.1016/j.disc.2012.05.014}.

\bibitem{DLZ} Y. Dong, L.L. Liu, T. Zhao, Descent tops and pinnacles on 2143- and 3421-avoiding permutations, preprint, arXiv:2610.01077 (2026), \url{https://doi.org/10.48550/arXiv.2610.01077}.

\bibitem{Dumont} D. Dumont, Interpr\'{e}tations combinatoires des nombres de Genocchi, Duke Math. J. \textbf{41} (1974) 305--318, \url{https://doi.org/10.1215/S0012-7094-74-04134-9}.

\bibitem{eli} S. Elizalde, Fixed points and excedances in restricted permutations, Electron. J. Combin. \textbf{18}(2) (2012) P29, \url{https://doi.org/10.37236/2025}.

\bibitem{Kitaev} S. Kitaev, Patterns in Permutations and Words, Monographs in Theoretical Computer Science. An EATCS Series, Springer, Berlin, Heidelberg, 2011, \url{https://doi.org/10.1007/978-3-642-17333-2}.
    
\bibitem{MacMahon} P.A. MacMahon, Combinatory Analysis, vol. 1, Cambridge University Press, London, 1915.

\bibitem{RSZ} A. Robertson, D. Saracino, D. Zeilberger, Refined restricted permutations, Ann. Comb. \textbf{6} (2002) 427--444, \url{https://doi.org/10.1007/s000260200015}.

\bibitem{West} J. West, Generating trees and the Catalan and Schr\"oder numbers, Discrete Math. \textbf{146} (1995) 247--262, \url{https://doi.org/10.1016/0012-365X(94)00067-1}.

\bibitem{YGZ} S.H.F. Yan, H. Ge, Y. Zhang, On a refinement of Wilf-equivalence for permutations, Electron. J. Combin. \textbf{22}(1) (2015) P1.20, \url{https://doi.org/10.37236/4465}.

\bibitem{YWZ} S.H.F. Yan, L.T. Wang, R.D.P. Zhou, On refinements of Wilf-equivalence for involutions, J. Algebraic Combin. \textbf{58} (2023) 69--94, \url{https://doi.org/10.1007/s10801-023-01239-1}.

\bibitem{ZY1} R.D.P. Zhou, S.H.F. Yan, Combinatorics of exterior peaks on pattern-avoiding symmetric transversals, Ann. Comb. \textbf{28} (2024) 977--1002, \url{https://doi.org/10.1007/s00026-023-00664-0}.

\bibitem{ZY2} R.D.P. Zhou, S.H.F. Yan, Equidistribution of set-valued statistics on standard Young tableaux and transversals, Adv. Appl. Math. \textbf{155} (2024) 102669, \url{https://doi.org/10.1016/j.aam.2023.102669}.

\bibitem{ZZY} R.D.P. Zhou, Y. Zang, S.H.F. Yan, Further refinements of Wilf-equivalence for patterns of length 4, J. Combin. Theory Ser. A \textbf{204} (2024) 105863, \url{https://doi.org/10.1016/j.jcta.2024.105863}.
\end{thebibliography}
\end{document}